\numberwithin{equation}{section} 
\numberwithin{figure}{section} 
\numberwithin{table}{section} 
\newcommand{\horrule}[1]{\rule{\linewidth}{#1}} 
\title{
\normalfont \normalsize
\horrule{2pt} \\[0.2cm] 
\LARGE  On the  Representation of \\ Involutive Jamesian Functions \\
\horrule{2pt} \\[0.0cm] 
}
\author{\large{Nikos Stamatis\footnote{ School of Applied Mathematical and Physical Sciences, National Technical University of Athens, Greece;  \texttt{nstam84@gmail.com}}}} 
\date{\normalsize\today} 
\newcommand{\ee}{\varepsilon}
 \newcommand{\repr}{  \mathcal{J}_{\text{rep}}}
  \newcommand{\invo}{  \mathcal{J}_{\text{inv}}}
\theoremstyle{definition}\newtheorem{definition}{Definition}[section]
\theoremstyle{plain}\newtheorem{proposition}[definition]{Proposition}
\theoremstyle{definition}\newtheorem{lemma}[definition]{Lemma}
\newtheorem{corollary}[definition]{Corollary}
\theoremstyle{plain}\newtheorem{theorem}[definition]{Theorem}
\theoremstyle{definition}\newtheorem{question}[definition]{Open
Problem}
\newtheorem*{quest}{Question}
\theoremstyle{definition}\newtheorem{remark}[definition]{Remark}
\newcommand\textlcsc[1]{\textsc{\MakeLowercase{#1}}}
\begin{document}

\maketitle




\begin{quote}\footnotesize{\textbf{Abstract.}}
 Involutive Jamesian Functions are functions aimed  to
predict the outcome of an athletic competition. They were introduced
in 1981 by Bill James, but until recently little was known regarding
their form.    Using methods from quasigroup theory we  are able to
obtain a complete description  of them.
\end{quote}

\section{Introduction}
In 1981, Bill James \cite{james} introduced the notion of a function
$P$, now called the James Function, in his effort to answer the
following question: \emph{Suppose that two baseball teams $A$ and
$B$ with winning percentages equal to $a$ and $b$ respectively, play
against each other. What is the probability $P(a,b)$ that $A$ beats
$B$?}

Instead of just providing a formula for $P(a,b)$, James was also
interested in what properties   such a function should satisfy. In
this spirit, he proposed the following self-evident conditions, now
called the \emph{proto-James conditions}:

\begin{enumerate} \item $P(a,a)=\frac{1}{2}$ and   $P(a,\frac{1}{2})=a$, for every $a\in
(0,1)$.
\item If $a>b$, then $P(a,b)>\frac{1}{2}$ and if $a<b$, then
$P(a,b)<\frac{1}{2}$.
\item If $b<\frac{1}{2}$, then $P(a,b)>a$ and if $b>\frac{1}{2}$,
then $P(a, b)<a$.
\item $0\leq P(a,b)\leq 1$ and if $ a\in (0,1)$, then $P(a, 0)=1$ and
$P(a,1)=0$.
\item $P(a,b)+P(b,a)=1$.
\end{enumerate}
After introducing the desired conditions, he   provided an example
of a function satisfying them, the explicit formula of which was
given by Dallas Adams:
\begin{equation}
P(a,b)=\frac{a(1-b)}{a(1-b)+(1-a)b}. \label{adams}
\end{equation}
He then conjectured that this was the only function that satisfied
his conditions.

In 2015, Christopher  Hammond, Warren Johnson and Steven Miller
published a survey \cite{hjm} on James functions and, among other
things, they addressed this conjecture. They discovered a whole
class of functions satisfying the proto-James conditions, thus
disproving the James conjecture. To avoid working with pathological
examples, they proposed the following axioms:
\begin{definition}\label{def1}
A function $J: [0,1]\times [0,1] \setminus\{(0,0),
(1,1)\}\rightarrow [0,1]$ is called an \emph{involutive Jamesian
function} if
\begin{enumerate}\item $J\big(a, J(a,b)\big)=b$ for every $a, b\in
(0,1)$,  \ \ \text{(involutive property)}
\item $J(a, b)+J(b, a)=1$ for every $a, b\in
(0,1)$ and
\item for a fixed $b_0\in (0,1)$, the function $J(\cdot, b_0)$ is   strictly
increasing and for  $b_0\in [0,1]$, it is nondecreasing.
\end{enumerate}
\end{definition}

\noindent Clearly, every involutive Jamesian function also satisfies
the proto-James conditions. They showed that even if we restrict
ourselves to this class of well behaved functions, we can still find
a wealth of counterexamples: For every increasing homeomorphism
$f:(0,1)\rightarrow \mathbb{R}$ such that $f(1-a)=-f(a)$ for every
$a \in (0,1)$, the function
\begin{equation}
J(a, b)= f^{-1}\big(f(a)-f(b)\big), \label{f-rep form}
\end{equation}
is an involutive Jamesian function and James' original example is
just a special case of (\ref{f-rep form}).  They closed their survey
proposing three open problems, one of which was whether the converse
were also true:

\begin{question} Is it true that every involutive Jamesian function
can be written in the form (\ref{f-rep form}) for some function $f$?
\end{question}

\noindent In what follows, an involutive Jamesian function which can
be written in this  form will be called \emph{$f$-representable}, or
just \emph{representable}. The main purpose of this paper is to
prove that there exist involutive Jamesian functions which are not
$f$-representable. This is achieved in Paragraph \ref{Jamesian Loops
Paragraph} with the introduction of the notion of Jamesian Loops.
Although our proof is existential,  we also show how we can adopt it
in order to produce explicit examples. We close our paper by
providing a complete characterization of involutive
Jamesian functions. As it turns out,  these functions can indeed be represented in a form very similar to (\ref{f-rep form}). 


\section{Basic Properties of Involutive Jamesian Functions}
First, we   recall  a few basic properties. Their proofs can be
found in \cite{hjm}.

\begin{proposition}Let $J$ be an involutive Jamesian function. Then
\begin{enumerate}
\item $J(a, b)=c$ if and only if $J(a, c)=b$. This is actually
equivalent to the involutive property.
\item $J(a, b)=J(1-b, 1-a)$ for every $a, b \in (0,1)$.
\item $J(a, a)=\tfrac{1}{2}$ and $J(a, \tfrac{1}{2})=a$ for every
$a\in (0,1)$.
\item $J$ is continuous.
\end{enumerate}
\end{proposition}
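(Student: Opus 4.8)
The plan is to derive parts (1)--(3) purely formally from the two algebraic identities in Definition~\ref{def1} together with the monotonicity axiom, and to treat the continuity claim (4) as the one genuinely analytic point.

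\emph{Part (1) and a preliminary structural remark.} If $J(a,b)=c$ with $a,b\in(0,1)$, the involutive axiom gives $J(a,c)=J\big(a,J(a,b)\big)=b$; interchanging the roles of $b$ and $c$ gives the reverse implication, and substituting $c=J(a,b)$ into the stated equivalence recovers the involutive identity verbatim, so the two are indeed equivalent. Before proceeding I would record that $J$ maps $(0,1)^2$ into $(0,1)$ and that each of its sections is a strictly monotone bijection of $(0,1)$: by axiom~(2), $J(a,b)=1-J(b,a)$, and axiom~(3) makes $b\mapsto J(b,a)$ strictly increasing on $(0,1)$, so $\phi_a:=J(a,\cdot)$ is strictly decreasing there; if $\phi_a(b_1)=0$ for some $b_1\in(0,1)$ then any $b\in(b_1,1)$ would force $\phi_a(b)<0$, which is impossible, so $\phi_a>0$ on $(0,1)$, and symmetrically $\phi_a<1$. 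Hence $\phi_a\colon(0,1)\to(0,1)$; the involutive axiom now makes all relevant compositions legitimate and shows $\phi_a\circ\phi_a=\mathrm{id}$, so $\phi_a$ is a strictly decreasing bijection of $(0,1)$, and by $J(a,b)=1-J(b,a)$ the section $J(\cdot,b)$ is a strictly increasing bijection of $(0,1)$.

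\emph{Parts (2) and (3).} Taking $b=a$ in axiom~(2) gives $2J(a,a)=1$, hence $J(a,a)=\tfrac12$, and then part~(1) with $c=\tfrac12$ gives $J(a,\tfrac12)=a$. For the symmetry $J(a,b)=J(1-b,1-a)$ I would argue by a short chain, valid because, by the preliminary remark, every entry occurring below lies in $(0,1)$: setting $c:=J(a,b)$, we successively obtain $J(a,c)=b$ (part~(1)), then $J(c,a)=1-b$ (axiom~(2)), then $J(c,1-b)=a$ (part~(1)), then $J(1-b,c)=1-a$ (axiom~(2)), and finally $J(1-b,1-a)=c$ (part~(1)); hence $J(1-b,1-a)=c=J(a,b)$.

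\emph{Part (4).} Each section $\phi_a=J(a,\cdot)$ is a strictly monotone map of the interval $(0,1)$ onto itself, hence continuous (a monotone function whose range is an interval has no jump discontinuities), and likewise each section $J(\cdot,b)$ is continuous; thus $J$ is monotone and continuous in each variable separately on $(0,1)^2$. I would then appeal to the elementary lemma that such a function of two real variables is automatically jointly continuous: given $(a_0,b_0)$ and $\varepsilon>0$, continuity of $J(\cdot,b_0)$ supplies $a_1<a_0<a_2$ with $J(a_1,b_0)$ and $J(a_2,b_0)$ within $\varepsilon$ of $J(a_0,b_0)$; continuity of $J(a_1,\cdot)$ and $J(a_2,\cdot)$ then yields a neighbourhood of $b_0$ on which $J(a_1,b)>J(a_0,b_0)-\varepsilon$ and $J(a_2,b)<J(a_0,b_0)+\varepsilon$; and monotonicity of $J$ in the first variable squeezes $J(a_1,b)\le J(a,b)\le J(a_2,b)$ for $a_1\le a\le a_2$, giving joint continuity on the open square. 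Continuity on the remainder of the domain (the boundary minus the two excluded corners) follows from a one-sided version of the same argument, using that $\phi_a$ is a decreasing bijection of $(0,1)$, so that $\lim_{b\to0^+}J(a,b)=1$ and $\lim_{b\to1^-}J(a,b)=0$ (with the symmetric limits in the first variable), which match the boundary values supplied by the proto-James conditions.

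The only step I expect to need real care is part~(4): parts (1)--(3) amount to symbol-pushing with the two algebraic axioms, whereas (4) requires the observation that the sections are \emph{onto} (this is where the involutive axiom is genuinely used, via bijectivity), the separate-to-joint continuity lemma, and a little bookkeeping to extend to the boundary.
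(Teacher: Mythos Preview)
Your argument is correct in all four parts: the algebraic derivations in (1)--(3) are clean, and your treatment of (4)---first showing each section is a monotone bijection of $(0,1)$ (hence continuous) and then passing from separate to joint continuity via the monotone-sandwich argument---is the standard route and works as written. The paper itself does not give a proof of this proposition at all; it simply states the result and refers the reader to \cite{hjm}, so there is no in-paper argument to compare against. Your self-contained proof is therefore strictly more than what the paper supplies, and it is in the same spirit as the argument in \cite{hjm} (compare the limit computation the paper later reproduces when showing the equivalence of Definitions~\ref{def1} and~\ref{def2}). One small caveat worth flagging: the boundary-continuity step leans on the proto-James boundary values $J(a,0)=1$, $J(a,1)=0$, which are not literally among the axioms of Definition~\ref{def1}; this is a looseness inherited from the paper's own formulation rather than a defect of your proof, and it evaporates once one works with Definition~\ref{def2} on $(0,1)^2$.
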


Although it is not necessary, it will prove useful to change the
definition of involutive Jamesian functions slightly. The two
definitions are equivalent, so this is more a matter of convenience
than of essence.
\begin{definition}\label{def2}
A function $J: (0,1)\times (0,1)\rightarrow (0,1)$ is called an
\emph{involutive Jamesian function} if
\begin{enumerate}\item $J\big(a, J(a,b)\big)=b$ for every $a, b\in
(0,1)$,  \ \ \text{(involutive property)}
\item $J(a, b)+J(b, a)=1$ for every $a, b\in
(0,1)$ and
\item for a fixed $b_0\in (0,1)$, the function $J(\cdot, b_0)$ is   strictly increasing.
\end{enumerate}
\end{definition}

Clearly, for every $J$ satisfying the properties of Definition
\ref{def1}, its restriction on $(0,1)\times (0,1)$ also satisfies
Definition  \ref{def2}. Conversely, suppose that $J$ satisfies
Definition \ref{def2} and let $a\in (0,1)$.  We first show that if
  $(b_n)_{n\in \mathbb{N}}$ is a strictly decreasing sequence in
  $(0,1)$
with $b_n\rightarrow 0$, then $J(a, b_n)\rightarrow 1$.

Suppose not. The sequence ($J(a, b_n))_{n\in \mathbb{N}}$ is
strictly increasing and bounded above, so it has to converge to its
supremum. Suppose that $c_n=J(a, b_n)\rightarrow c_0<1.$ Then
$J(c_n, a)=1-b_n$ is a strictly increasing sequence converging to
$1$.  We pick a $c_0' \in (c_0, 1)$. Since $J(\cdot, a)$ is strictly
increasing and $c_n<c_0<c_0'$, we have that
\begin{equation}
J(c_n, a)<J(c_0, a)<J(c_0', a)
\end{equation}
for every $n$ and since $J(c_n,a)\rightarrow 1$, this implies that
$1\leq J(c_0, a)<J(c_0', a),$ a contradiction. This argument, which
is just an adaptation of the proof of \cite[Proposition 7]{hjm} to
our context,   implies that $J$ can be extended continuously and
uniquely on $[0,1]\times [0,1] \setminus\{(0,0), (1,1)\}$ and this
extension satisfies Definition \ref{def1}.

The reason why it is more convenient to work with the second
definition  will become clear soon, but before that, it is important
to begin our study  by  noting that the $f$-representability of
Jamesian functions has a very useful reformulation:
\begin{proposition}\label{transitivityEquivalentFormulations} Let $J$ be an involutive Jamesian function. The
following conditions are equivalent: \begin{enumerate}
\item $J\big(J(a, c), J(b, c)\big)=J(a, b)$ for every
$a, b, c \in (0,1)$, \ \ \ \ \ \text{(transitivity)}
\item $J\big(J(a, b), J(a, c)\big)=J(c, b)$ for every
$a, b, c \in (0,1)$,
\item $J\big(b, J(a, c)\big)=J\big(c, J(a, b)\big)$ for every
$a, b, c \in (0,1)$,
\item $J$ is $f$-representable.
\end{enumerate}
\end{proposition}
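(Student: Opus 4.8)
The plan is to establish the cycle $(4)\Rightarrow(1)\Leftrightarrow(2)\Leftrightarrow(3)\Rightarrow(4)$, so that all the real work lies in recovering a representing function $f$ from transitivity. The implication $(4)\Rightarrow(1)$ is immediate: if $J(a,b)=f^{-1}\big(f(a)-f(b)\big)$, then $f\big(J(a,b)\big)=f(a)-f(b)$, hence
\[
f\big(J(J(a,c),J(b,c))\big)=\big(f(a)-f(c)\big)-\big(f(b)-f(c)\big)=f(a)-f(b)=f\big(J(a,b)\big),
\]
and injectivity of $f$ yields $(1)$; conditions $(2)$ and $(3)$ follow by the same one-line computation. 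The equivalences $(1)\Leftrightarrow(2)\Leftrightarrow(3)$ are purely formal: for $(1)\Leftrightarrow(2)$ one applies the rule $J(x,y)=z\iff J(x,z)=y$ to $(1)$, obtaining $J\big(J(a,c),J(a,b)\big)=J(b,c)$, and then relabels $b\leftrightarrow c$; for $(2)\Leftrightarrow(3)$ one substitutes $b\mapsto J(a,b)$ in $(2)$ and uses the involutive property $J\big(a,J(a,b)\big)=b$, the reverse substitution recovering $(2)$ from $(3)$.

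The heart of the proof is $(1)\Rightarrow(4)$. The key idea is to introduce on $(0,1)$ the operation $a*b:=J(a,1-b)$ and to show that $\big((0,1),*\big)$ is an abelian group with the sought $f$ as an isomorphism onto $(\mathbb R,+)$. The elementary identities already recorded show at once that $*$ is commutative (since $J(a,1-b)=J(1-(1-b),1-a)=J(b,1-a)$), has identity element $\tfrac12$ (since $J(a,\tfrac12)=a$), has inverses given by $a^{-1}=1-a$ (since $J(a,a)=\tfrac12$), and that each translation $x\mapsto x*b$ is a strictly increasing continuous bijection of $(0,1)$ onto itself (because $J(\,\cdot\,,1-b)$ is). Rewriting transitivity $(1)$ in terms of $*$ collapses it to the single identity $(p*s)*(s^{-1}*t)=p*t$ for all $p,s,t\in(0,1)$; specializing $p=\tfrac12$ gives the ``left inverse'' identity $s*(s^{-1}*t)=t$, and from these two identities together with commutativity and $(a^{-1})^{-1}=a$ one deduces associativity $(u*v)*w=u*(v*w)$ via the substitution $p=u$, $s=v$, $t=v*w$. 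Thus $\big((0,1),*,<\big)$ is a linearly ordered abelian group.

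It then remains to identify this ordered group with $(\mathbb R,+)$. Using the continuity of $J$, I would first check that $*$ is Archimedean: for $x>\tfrac12$ the powers $x^{*n}$ form a strictly increasing sequence that cannot be bounded above in $(0,1)$, since a bound would converge, by continuity of $*$, to a fixed point of the translation $y\mapsto y*x$ and hence force $x=\tfrac12$. H\"older's theorem then provides an injective, order-preserving homomorphism $f\colon(0,1)\to(\mathbb R,+)$, which is automatically strictly increasing. Its image is a subgroup of $\mathbb R$, hence either cyclic or dense; it cannot be cyclic because $(0,1)$ is densely ordered, so it is dense, and the Dedekind completeness of the ordered set $(0,1)$ forces $f$ to be surjective (for $r\in\mathbb R$ the supremum of $\{x\in(0,1):f(x)<r\}$ lies in $(0,1)$ and is sent to $r$). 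Hence $f$ is an increasing homeomorphism $(0,1)\to\mathbb R$ with $f(1-a)=f(a^{-1})=-f(a)$, and since $J(a,b)=a*(1-b)=a*b^{-1}$ we conclude $f\big(J(a,b)\big)=f(a)-f(b)$, i.e. $J$ has the form (\ref{f-rep form}).

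The main obstacle is the last step. Two points in it require care: verifying that transitivity really does encode associativity of $a*b=J(a,1-b)$ (the bookkeeping behind the substitutions above), and the passage from ``Archimedean linearly ordered abelian group'' to ``$(\mathbb R,+)$'', where the density and completeness argument for surjectivity of the H\"older embedding has to be made explicit. Everything else is routine.
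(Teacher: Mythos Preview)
Your argument is correct. The equivalences $(1)\Leftrightarrow(2)\Leftrightarrow(3)$ and the implication $(4)\Rightarrow(1)$ coincide with the paper's treatment. For the substantive direction $(1)\Rightarrow(4)$ the paper simply invokes Hossz\'u's theorem, whose proof (sketched immediately after the proposition) proceeds exactly as you do: define $a\cdot b=J(a,1-b)$, check that transitivity makes this a group, and then identify the resulting one-dimensional topological group with $(\mathbb{R},+)$. The only genuine difference is in that last identification: Hossz\'u (and the paper's exposition) appeals to Brouwer's theorem on one-dimensional continuous groups, whereas you verify the Archimedean property by hand and use H\"older's embedding, upgrading it to an isomorphism via the Dedekind completeness of the ordered set $(0,1)$. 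Your route is more self-contained and order-theoretic; the paper's is shorter by deferring to a classical citation. Your derivation of associativity from the collapsed identity $(p*s)*(s^{-1}*t)=p*t$ is also a clean alternative to the direct computation the paper gives later, in Proposition~\ref{correspondence between loops and James functions}, where associativity of $\cdot$ is shown to be equivalent to condition~$(3)$.
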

\begin{proof}
The equivalence of (1), (2) and (3) is a straightforward application
of the involutive property. For example, to obtain (3) from (2) one
needs to plug $b:=J(a, b)$ into (2), while for the converse, to pick
an $x_0\in (0,1)$ such that $b=J(a, x_0)$ and plug it in (3) to
obtain $J\big(J(a, b), J(a, c)\big)= J\big(x_0, J(a,c)\big)=J\big(c,
J(a, x_0)\big)=J(c, b)$.

If $J$ is $f$-representable, then
\begin{eqnarray*}
J\big(J(a, c), J(b, c)\big)&=& f^{-1}\big(f(J(a, c))-f(J(b, c))\big)
\\
&=& f^{-1}\big(f(a)-f(c) -f(b) +f(c)\big) \\
&=&f^{-1}\big(f(a)  -f(b)  \big) \\&=&J(a, b),
\end{eqnarray*}
so it satisfies (1). Only the direction (1) $\Rightarrow$ (4) is
   non-trivial and it is actually a Theorem by M. Hossz\'{u}
   \cite{hos}.

\end{proof}

\begin{corollary}
An involutive Jamesian function $J$ is $f$-representable if and only
if it is transitive, that is $J\big(J(a, c), J(b, c)\big)=J(a, b)$
for every $a, b, c \in (0,1)$. \end{corollary}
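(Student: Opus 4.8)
The plan is to read the statement off Proposition~\ref{transitivityEquivalentFormulations}. That proposition asserts that, for an involutive Jamesian function $J$, the four listed conditions are mutually equivalent; condition~(1) is verbatim the transitivity identity $J\big(J(a,c),J(b,c)\big)=J(a,b)$, and condition~(4) is verbatim $f$-representability. The corollary is therefore exactly the equivalence (1)$\iff$(4) extracted from that chain, and no further argument is required beyond pointing to which implications of the preceding proposition are being invoked.

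If one wants the corollary to be self-contained, I would recall the two directions separately. The implication ``$f$-representable $\Rightarrow$ transitive'' is the short substitution already displayed inside the proof of Proposition~\ref{transitivityEquivalentFormulations}: writing $J(a,b)=f^{-1}\big(f(a)-f(b)\big)$ and expanding $J\big(J(a,c),J(b,c)\big)$, the two copies of $f(c)$ cancel and one is left with $f^{-1}\big(f(a)-f(b)\big)=J(a,b)$. For the converse ``transitive $\Rightarrow$ $f$-representable'', the first step is to use the involutive property (as in the proof of Proposition~\ref{transitivityEquivalentFormulations}, passing through condition~(3)) to rewrite transitivity as the symmetric identity $J\big(b,J(a,c)\big)=J\big(c,J(a,b)\big)$; this is an instance of the functional equation treated by Hossz\'u \cite{hos}, and his theorem produces the increasing homeomorphism $f:(0,1)\to\mathbb{R}$ with $f(1-a)=-f(a)$ realizing $J$ in the form (\ref{f-rep form}).

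The only genuine difficulty in this chain is Hossz\'u's theorem, which is used as a black box; the corollary itself presents no obstacle once Proposition~\ref{transitivityEquivalentFormulations} is available.
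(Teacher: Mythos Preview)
Your proposal is correct and matches the paper: the corollary is stated there without a separate proof, as it is exactly the equivalence (1)$\iff$(4) of Proposition~\ref{transitivityEquivalentFormulations}, which is precisely what your first paragraph says.

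One small inaccuracy in your optional self-contained sketch: you route the converse through condition~(3) and say that \emph{that} identity is the one treated by Hossz\'u. In fact, as the paper makes explicit, Hossz\'u's theorem is stated for the transitivity equation itself, $F\big(F(x,t),F(y,t)\big)=F(x,y)$, i.e.\ condition~(1) directly; no detour through~(3) is needed. Also note that Hossz\'u's theorem yields only the quasi-difference form $J(a,b)=f^{-1}\big(f(a)-f(b)\big)$; the extra symmetry $f(1-a)=-f(a)$ is not part of his conclusion and would have to be extracted separately from the Jamesian axioms if you wanted a fully self-contained account.
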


\noindent Although we will not prove   Hossz\'{u}'s theorem in
detail, it is crucial  to understand the main idea behind its proof,
as our approach on the representation of Jamesian functions relies,
essentially,  on the   same principles.

Hossz\'{u} attempted to solve the functional equation of
transitivity
\begin{equation} F\big(F(x, t), F(y, t)\big)=F(x, y), \ \ \forall x, y, t\in
(0,1), \label{transitivityFE}
\end{equation}
assuming   that $F:(0,1)\times (0,1)\rightarrow (0,1)$ was a
continuous and strictly monotonic function. He observed that if a
function $F$ satisfied (\ref{transitivityFE}), then the operation
$x\cdot y=F(x, 1-y)$ defined a continuous group   on $(0,1)$. A
theorem by Brouwer \cite{bro} asserts that all one dimensional
continuous groups are isomorphic to the additive group of real
numbers, so the operation $\cdot$ on $(0,1)$ had to be of the form
\begin{equation}
x\cdot y= f^{-1}\big(f(x)+f(y)\big), \  \ \forall x, y \in (0,1),
\end{equation}
with $f: (0,1)\rightarrow \mathbb{R}$   a  homeomorphism. As
\begin{equation}F(x,y)=x\cdot(1-y)=f^{-1}\big(f(x)+f(1-y)\big), \end{equation} he was then
able to give a complete description of the solutions of
(\ref{transitivityFE}).

\begin{theorem} {\textbf{\normalfont(\textbf{M. Hossz\'{u}, 1953).}}} A
continuous and strictly monotonic function $F: (a,b)\times (a, b)
\rightarrow (a,b)$ satisfies the functional equation of transitivity
\begin{equation}
 F\big(F(x, t), F(y, t)\big)= F(x, y), \ \forall x, y, t \in (a, b),
\end{equation}
if and only if $F$ is written in the form
\begin{equation}
F(x, y)= f^{-1}\big(f(x)-f(y)\big),  \ \ \text{(quasi-difference)}
\end{equation}
where $f$ is a continuous and strictly monotonic function.
\end{theorem}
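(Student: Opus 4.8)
We only sketch the proof of the ``only if'' direction, since the ``if'' direction is the one-line substitution already displayed above and the argument below is classical. The plan is to manufacture a continuous one--dimensional group out of $F$ and then invoke Brouwer's classification \cite{bro}, exactly along the lines indicated before the statement, but replacing the ad hoc role played by $1-y$ on $(0,1)$ with the intrinsic ``inversion'' built from $F$ itself.

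\emph{Step 1: the sections of $F$ are bijections, and the identity element.} For each $t\in(a,b)$ consider the partial map $\psi_t:=F(\cdot\,,t)\colon(a,b)\to(a,b)$. By hypothesis $\psi_t$ is continuous and strictly monotonic, hence injective with range an open subinterval of $(a,b)$. The first real point is that this subinterval is in fact all of $(a,b)$, i.e. every $\psi_t$ is surjective. One extracts this from the functional equation by letting $x\to a^{+}$ and $x\to b^{-}$ with $y,t$ held fixed: if $\mathrm{range}(\psi_t)$ had an endpoint $p$ lying inside $(a,b)$, then passing to the limit in $F\big(F(x,t),F(y,t)\big)=F(x,y)$, which is legitimate by continuity of $F$ at the interior point $\big(p,F(y,t)\big)$, produces an identity that contradicts strict monotonicity of $F$ in the relevant variable. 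Granting surjectivity of every $\psi_t$, set $y=t$ in transitivity to get $F\big(F(x,t),F(t,t)\big)=F(x,t)$, i.e. $F\big(u,F(t,t)\big)=u$ for all $u\in(a,b)$; since $F(u,\cdot\,)$ is injective this pins $F(t,t)$ to a single value $e$ independent of $t$. In particular $\psi_e=\mathrm{id}_{(a,b)}$, the assignment $t\mapsto\psi_t$ is injective (already $\psi_t(e)=F(e,t)$ determines $t$), and one reads off that $F$ is increasing in one variable and decreasing in the other.

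\emph{Step 2: the group.} Substituting $y:=\psi_t^{-1}(s)$ into transitivity gives $F\big(F(x,t),s\big)=F\big(x,\psi_t^{-1}(s)\big)$, which says precisely that
\[
\psi_s\circ\psi_t=\psi_r,\qquad\text{where $r$ is the unique solution of }F(r,t)=s .
\]
Hence $\mathcal G:=\{\psi_t:t\in(a,b)\}$ is closed under composition, has neutral element $\psi_e=\mathrm{id}$, and choosing $s$ with $F(e,t)=s$ shows $\psi_{F(e,t)}\circ\psi_t=\psi_e$, so every element is invertible; thus $\mathcal G$ is a group and $t\mapsto\psi_t$ is a bijection of $(a,b)$ onto $\mathcal G$. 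Transporting the topology of the interval, the multiplication $(s,t)\mapsto r$ (the solution of $F(r,t)=s$) and the inversion $t\mapsto F(e,t)$ are continuous, because solving $F(r,t)=s$ for $r$ is a continuous operation when $F$ is continuous and strictly monotonic in its first argument. So $\mathcal G$ is a topological group whose underlying space is the open interval $(a,b)$.

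\emph{Step 3: Brouwer and the representation.} By Brouwer's theorem \cite{bro}, a continuous one--dimensional group is isomorphic to $(\mathbb{R},+)$; concretely there is a homeomorphism $f\colon(a,b)\to\mathbb{R}$ with $f\big(\psi_s\circ\psi_t\big)=f(s)+f(t)$, where we write $f(t)$ for the image of $\psi_t$. Feeding in the description of the product from Step~2, $F(r,t)=s$ forces $f(r)=f(s)+f(t)$, that is $f\big(F(x,y)\big)=f(x)-f(y)$ for all $x,y\in(a,b)$, equivalently $F(x,y)=f^{-1}\big(f(x)-f(y)\big)$; and $f$, being a homeomorphism of an interval onto $\mathbb{R}$, is automatically strictly monotonic. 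The hard part is Step~1 together with the two regularity checks: upgrading strict monotonicity of the sections $\psi_t$ to surjectivity onto $(a,b)$ by the endpoint limiting argument, and verifying continuity of the induced multiplication and inversion so that Brouwer's theorem genuinely applies. Everything else is formal; in particular the associativity one would otherwise verify by repeated substitution into the functional equation is free here, being nothing but associativity of composition of maps.
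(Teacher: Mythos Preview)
Your sketch is correct and lands on the same endgame as the paper's outline: manufacture a one--dimensional topological group out of $F$ and invoke Brouwer's theorem \cite{bro}. The paper itself does not give a proof, only the remark preceding the statement that Hossz\'{u} observed the operation $x\cdot y=F(x,1-y)$ defines a continuous group on $(0,1)$, after which Brouwer does the rest. Your route differs in how the group is built. Rather than using the extrinsic involution $y\mapsto 1-y$ (which is tied to the particular interval $(0,1)$ and, in the paper's context, really to the extra Jamesian identities), you let the sections $\psi_t=F(\cdot,t)$ act on $(a,b)$ and form the group they generate under composition; associativity then comes for free, and the inversion $t\mapsto F(e,t)$ is read off from $F$ itself. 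This buys you a proof that works verbatim on an arbitrary interval $(a,b)$, matching the generality in which the theorem is stated, whereas the $1-y$ device is specific to $(0,1)$ and leans on symmetries that a bare transitive $F$ need not have. Conversely, Hossz\'{u}'s packaging has the virtue that the group operation lives directly on the interval and the verification of the group axioms is a short substitution exercise once one has the right $F$. The one place your sketch is genuinely thin is the surjectivity of $\psi_t$ in Step~1: the limiting argument you indicate can be made to work, but as written it does not quite isolate the contradiction (a strictly monotone map can agree with the identity on a proper subinterval without global trouble); you are right to flag this, together with the continuity of the induced multiplication and inversion, as the only non--formal content.
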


To return to our problem, it is easy to see that an involutive
Jamesian function $J$ is transitive if and only if the induced
operation $a\cdot b = J(a, 1-b)$ forms a group (Proposition
\ref{correspondence between loops and James functions}). Since we do
not know yet whether such functions are necessarily transitive, we
cannot hope that the corresponding operations will form a group.
However, as we will show in the next section, they form  the next
best structure we could hope for: a \emph{loop}.

\section{Loops and Jamesian Loops}

Quasigroups  are non-associative algebraic structures that  arise
naturally in many   Mathematical fields. Although they first
appeared in the literature in the early 1900s,  it was not  until
the 1930s that they were defined and studied systematically.
Quasigroups with an identity element are called \emph{loops}. They
emerged by the influential works of Ruth Moufang (1935) and Gerrit
Bol (1937) and soon they became a separate area of research.
For a fascinating  historical overview of quasigroups and loops, the
reader should check \cite{pfl1}.

Before we start exploring the connection between loops and Jamesian
functions, we need a few basic definitions and properties from loop
theory. A standard reference for their algebraic theory   is
\cite{pfl2}, whereas the  first chapter of \cite{ns} offers an
introduction to their topological aspects.

\begin{definition}
A \emph{loop} $(L, \cdot)$ is a  set $L$ equipped with an operation
$\cdot: L\times L \rightarrow L$, satisfying the following
properties:
\begin{enumerate}  \item There exists an element $e\in L$,  such
that $a\cdot e=e\cdot a=a$ for every $a\in L$. This element is
called the identity element, or the unit of $L$. \item For every
$a\in L$ there exists an $a^{-1}\in L$ such that $a\cdot
a^{-1}=a^{-1} \cdot a=e$.
\end{enumerate}
\end{definition}

\noindent So a loop is a structure that satisfies every group axiom
except, possibly, for   associativity. A loop which is not a group
is called a \emph{proper loop}.

\begin{definition} A loop $L$ is said to have the \emph{inverse property } (or
\emph{IP}) if \begin{equation}x(x^{-1}y)=y=(yx^{-1})x  \text{ for
every } x, y\in L. \label{IP property}\end{equation}
\end{definition}
\noindent It is easy to check that a loop with the inverse property
also satisfies $(xy)^{-1}=y^{-1}x^{-1}$ for every $x, y \in L$: Let
$a, b\in L$. We substitute $x:=(ab)^{-1}$ and $y=b^{-1}$ into
(\ref{IP property}) to get
$(ab)^{-1}\left((ab)b^{-1})\right)=b^{-1}$. By the inverse property,
$(ab)b^{-1}=a$, so $(ab)^{-1}\cdot a=b^{-1}$, which implies that
$(ab)^{-1}=b^{-1}a^{-1}$.

\begin{definition} A \emph{topological loop} $(L, \cdot)$ is a loop which
is also a topological space, such that the product and the inverse
operations are continuous.
\end{definition}

\begin{definition} Two topological loops $(L, \cdot)$, $(L', \ast)$ are said to
be \emph{  isomorphic} if there exists a (topological) homeomorphism
$f: L\rightarrow L'$ such that     $f(a\cdot b)=f(a)\ast f(b)$ for
every $a, b \in L$.
\end{definition}

\begin{remark}\label{propernessremark} We can easily see that if two loops are isomorphic and one of
them is proper, then the other one has to be proper as well: Say
that $(L, \cdot)$ is proper. Then there exist $a, b, c\in L$ such
that $a\cdot(b\cdot c) \neq (a\cdot b) \cdot c$. Since $f$ is one to
one,

\begin{equation*}f\big(a\cdot(b\cdot c)\big) =f(a)\ast \big(f(b)\ast f(c)\big)\neq
\big(f(a)\ast f(b)\big)\ast f(c)=f\big((a\cdot b) \cdot
c\big),\end{equation*} which shows that $\ast$ is not associative.
Similarly one can check that commutativity and IP are properties
   preserved by isomorphisms.
\end{remark}

An isomorphism from a topological loop $L$ to itself is called an
\emph{automorphism}. To every loop we associate two important
families of automorphisms, its left and right translations:
\begin{definition}Given a loop
$(L, \cdot)$ and $b_0\in L$, the function  $R_{b_0}: L\rightarrow
L,$ defined as $R_{b_0}(a)=a\cdot b_0$ for $ a \in L$, is called a
\emph{right translation.} Similarly, given an $a_0 \in L$, the
function $L_{a_0}(b)=a_0\cdot b$ is called a \emph{left
translation}. If the loop is commutative, the sets of left and right
translations coincide.
\end{definition}

The group $G$ generated by the left translations, equipped with the
Arens topology, plays an important role in the classification of
loops. In our work we will not need this approach, but the curious
reader should consult \cite{ns} for more details.

\subsection{Jamesian Loops} \label{Jamesian Loops Paragraph}
We now return to our main question:

\begin{quest}  Given an involutive Jamesian function $J$, what properties
does the operation $a\cdot b=J(a, 1-b)$ defined on $(0,1)$ satisfy?
\end{quest}

First of all, for every $a, b$, $a\cdot b=J(a, 1-b)=J(b, 1-a)=b\cdot
a,$ so it has to be commutative. Additionally, $a\cdot
\tfrac{1}{2}=J(a, \tfrac{1}{2})=a$ and $a\cdot (1-a)=J(a,
a)=\tfrac{1}{2}$, so $\big( (0,1), \cdot\big)$ is in fact a loop,
having $e=\tfrac{1}{2}$ as an identity and such that the inverse of
every $a$ is $a^{-1}=1-a$. The involution property of $J$ can be
rewritten as
\begin{equation*}
b=J\big(a, J(a, b)\big) = J\big( a, 1-J(b,a)\big) = a\cdot J(b,a)= a
\cdot\big(b\cdot(1-a)\big)
\end{equation*}
 for every $a, b$, which, because of the commutativity, is just the inverse property of
loops. Similarly, the identity $J(a, b)+J(b,a)=1$ implies that
$(a\cdot b)^{-1}=b^{-1}\cdot a^{-1}$. This   already follows from
the inverse property, so we   may omit it when we define the notion
of Jamesian loops.

 Finally, since $J$ is strictly increasing with respect to
the first variable, the left (and right) translations of our
operation need to be strictly increasing functions. We collect all
these observations into the definition of  Jamesian loops:

\begin{definition}\label{defJamesLoops} A topological loop $\big((0,1), \cdot\big)$ is
called a \emph{Jamesian loop} if
\begin{enumerate}  \item  it is commutative, \item its unit is $e=\tfrac{1}{2}$ and the inverse of every $a\in (0,1)$ is $a^{-1}=1-a$, \item
$a\big(b(a^{-1})\big)=b$ for every $a, b\in (0,1)$ and
\item the right (and left) translations are strictly increasing
functions.
\end{enumerate}
\end{definition}
\noindent As we expected, there is a one to one correspondence
between involutive Jamesian functions and Jamesian loops:

\begin{proposition} \label{correspondence between loops and James functions} If $J$ is an
involutive Jamesian function then the operation $\cdot$ for which
$a\cdot b=J(a,1-b)$ for $a, b \in (0,1)$, is a Jamesian loop.
Conversely, if $\big((0,1),\cdot\big)$ is a Jamesian loop then the
function $J(a, b)=a\cdot(1-b)$ is an involutive Jamesian function.
In addition, a Jamesian loop is a group if and only if the induced
involutive Jamesian function is $f$-representable.
\end{proposition}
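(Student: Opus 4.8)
The plan is to dispatch the three assertions in turn, the first being essentially a bookkeeping of the computations already carried out before Definition~\ref{defJamesLoops}. For the direction from $J$ to a Jamesian loop, I put $a\cdot b=J(a,1-b)$; this is well defined on $(0,1)$ because $J$ maps $(0,1)\times(0,1)$ into $(0,1)$ and $1-b\in(0,1)$. Commutativity is read off from $J(a,b)=J(1-b,1-a)$, the unit $e=\tfrac{1}{2}$ and the formula $a^{-1}=1-a$ come from $J(a,\tfrac{1}{2})=a$ and $J(a,a)=\tfrac{1}{2}$, and the inverse-property identity $a(b\,a^{-1})=b$ is the involution $J\big(a,J(a,b)\big)=b$ combined with $J(a,b)=1-J(b,a)$ — exactly the manipulations displayed above. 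Condition (4) of Definition~\ref{defJamesLoops} follows from condition (3) of Definition~\ref{def2}, since $R_{b_0}(a)=J(a,1-b_0)$ (and, by commutativity, the left translations agree with the right ones). Finally $\cdot$ and $a\mapsto a^{-1}=1-a$ are continuous because $J$ is continuous, so we genuinely have a topological loop.

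For the converse, starting from a Jamesian loop $\big((0,1),\cdot\big)$ I set $J(a,b)=a\cdot(1-b)=a\,b^{-1}$, writing $b^{-1}=1-b$, and I check the three clauses of Definition~\ref{def2}. A Jamesian loop is commutative and has the inverse property (clause (3) of Definition~\ref{defJamesLoops} is exactly $x(x^{-1}y)=y$ once commutativity is used), so the identities $(xy)^{-1}=y^{-1}x^{-1}$ and $(xy)y^{-1}=x$ are available. Then $J\big(a,J(a,b)\big)=a\cdot\big(a\,b^{-1}\big)^{-1}=a\cdot\big(b\,a^{-1}\big)=b$ by clause (3); $J(a,b)+J(b,a)=a\,b^{-1}+b\,a^{-1}$ while $\big(b\,a^{-1}\big)^{-1}=a\,b^{-1}$, so the two terms are complementary and sum to $1$; and $J(\cdot,b_0)=R_{b_0^{-1}}$ is strictly increasing by clause (4). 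Hence $J$ is an involutive Jamesian function, and the two constructions are mutually inverse.

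For the third statement I must show that a Jamesian loop is a group — equivalently, that $\cdot$ is associative, the identity and inverses being already present — if and only if the associated $J$ is $f$-representable. By the Corollary following Proposition~\ref{transitivityEquivalentFormulations}, $f$-representability of $J$ is the same as transitivity, $J\big(J(a,c),J(b,c)\big)=J(a,b)$ for all $a,b,c$, so it is enough to prove that transitivity of $J$ is equivalent to associativity of $\cdot$. Writing $J(x,y)=x\,y^{-1}$ and using $\big(b\,c^{-1}\big)^{-1}=c\,b^{-1}$, transitivity turns into the identity $\big(a\,c^{-1}\big)\big(c\,b^{-1}\big)=a\,b^{-1}$ for all $a,b,c\in(0,1)$. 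If $\cdot$ is associative this is immediate from $c^{-1}c=e$. Conversely, given this identity, I substitute $a:=p\cdot c$ (legitimate for arbitrary $p,c$ since $(0,1)$ is the whole loop), collapse the first factor via $(p\,c)c^{-1}=p$, and rename $b^{-1}=:q$; what remains is $p(c\,q)=(p\,c)q$ for all $p,c,q$, i.e.\ associativity. Chaining the equivalences finishes the proof.

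I expect everything except that last substitution to be routine: the first assertion is little more than a rereading of the paragraph motivating Definition~\ref{defJamesLoops}, and the converse is a short computation with the inverse-property identities. The one point demanding care is verifying that the transitivity identity collapses to associativity and not to some weaker or stronger relation after the substitution $a\mapsto p\cdot c$ and the relabelling of $b$; as sketched, the inverse property supplies precisely the cancellations needed, so the equivalence is exact.
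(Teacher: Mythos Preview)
Your proof is correct and, for the first two assertions, essentially identical to the paper's: the paper also defers the forward direction to the discussion before Definition~\ref{defJamesLoops} and verifies the converse by the same inverse-property computations (it writes $J(\cdot,b_0)=R_{b_0}$ where $R_{b_0^{-1}}$ is meant, a slip you have silently corrected).

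For the third assertion your route diverges slightly. The paper computes $(a\cdot b)\cdot c$ and $a\cdot(b\cdot c)$ directly in terms of $J$, obtaining $J\big(c,J(1-b,a)\big)$ and $J\big(a,J(1-b,c)\big)$, so that associativity becomes condition~(3) of Proposition~\ref{transitivityEquivalentFormulations}. You instead rewrite condition~(1) (transitivity) in loop language as $(ac^{-1})(cb^{-1})=ab^{-1}$ and then collapse it to full associativity via the substitution $a:=p\cdot c$, $q:=b^{-1}$, using the inverse property for the cancellation $(pc)c^{-1}=p$. Both arguments pass through Proposition~\ref{transitivityEquivalentFormulations}; the paper's is marginally shorter since it lands on an equivalent condition without a further substitution, while yours makes transparent exactly where the inverse property is needed to upgrade the transitivity identity to associativity.
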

\begin{proof}
From the preceding discussion, if $J$ is an involutive Jamesian
function, then $\cdot$ is a Jamesian loop. Conversely, if
$\big((0,1),\cdot\big)$ is a Jamesian loop and we define $J(a,
b)=a(1-b)$, then the inverse property of $\cdot$ implies that $J(a,
J(a, b))=b$ for every $a, b$. Similarly, $(a\cdot
b)^{-1}=b^{-1}\cdot a^{-1}$ implies that $J(a, b)+J(b,a ) =1$. The
function $J(\cdot, b_0)$ is strictly increasing as $J(\cdot,
b_0)=R_{b_0}$ and every right translation is strictly increasing. So
$J$ satisfies all three axioms of involutive Jamesian functions.

For the additional part, the induced operation is always a loop, so
we are in fact seeking an equivalent characterization of
associativity. Let $a, b, c\in (0,1)$. We compute the expressions
\begin{eqnarray*}
(a\cdot b)\cdot c &=& J(a, 1-b)\cdot c = J\big(J(a, 1-b), 1-c\big)\\
&=&J\big(c, 1-J(a, 1-b)\big)\\& =&J\big(c, J( 1-b,a)\big) \ \ \ \ \ \  \text{and}\\
a\cdot(b\cdot c) &=& a\cdot J(b, 1-c)= J\big(a, 1-J(b,1-c)\big) \\
&=&J\big(a,  J(1-c,b)\big) \\
&=&J\big(a,  J(1-b,c)\big).
\end{eqnarray*}
The operation is associative if and only if $J\big(c, J(
b,a)\big)=J\big(a,  J(b,c)\big)$ for every $a, b, c \in (0,1)$,
which  is equivalent to the $f$-representability of $J$ due to
Proposition \ref{transitivityEquivalentFormulations}.
\end{proof}

%

The following example was constructed  by Salzmann in his attempt to
``\emph{prove the existence of  planar associative division neorings
which possess an additive loop with the inverse property and which
are homeomorphic to $\mathbb{R}$}'' \cite[pg. 135]{str}. With a
slight modification it also proves the existence of proper Jamesian
loops and, in view of the previous proposition,  of involutive
Jamesian functions which are not
$f$-representable. 

\begin{theorem} \label{SalzTheorem} {\normalfont{\textbf{(H. Salzmann, 1957).}}} There exists a commutative proper topological loop $(\mathbb{R}, \cdot)$ with the inverse
property having  $e=0$ as an identity  and such that the inverse of
every $x\in \mathbb{R}$ is $x^{-1}=-x$. Additionally, its left (and
right) translations are strictly increasing.
\end{theorem}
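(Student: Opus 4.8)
The plan is to realise $(\mathbb{R},\cdot)$ as the ``additive'' loop of a Moulton‑type refracted pencil of lines in the plane. Concretely, I would look for a symmetric function $\varepsilon\colon\mathbb{R}^{2}\to\mathbb{R}$, small in the $C^{1}$ norm and supported in a compact set $K$ that misses the three lines $\{x=0\}$, $\{y=0\}$ and $\{x+y=0\}$, and set
\[ x\cdot y \;=\; x+y+\varepsilon(x,y). \]
Most of the required properties are then built in. Symmetry of $\varepsilon$ gives commutativity; $\varepsilon(x,0)=0$ (since $K$ misses $\{y=0\}$) gives $x\cdot 0=x$, so the unit is $0$; $\varepsilon(x,-x)=0$ (since $K$ misses $\{x+y=0\}$) gives $x\cdot(-x)=0$, so $x^{-1}=-x$; the $C^{1}$‑smallness makes each $y\mapsto x\cdot y$ strictly increasing, and, being a bounded perturbation of a translation, a homeomorphism of $\mathbb{R}$, so every left (hence, by commutativity, every right) translation is a strictly increasing homeomorphism; and continuity of $\varepsilon$, together with the obvious continuity of $x\mapsto -x$, makes $(\mathbb{R},\cdot)$ a topological loop. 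Geometrically, the level sets $\{(x,y):x\cdot y=c\}$ form a pencil of ``bent lines'' through $(c,0)$ and $(0,c)$, undisturbed near the anti‑diagonal, which is the level‑$0$ curve.

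Properness then costs nothing. If $\varepsilon\not\equiv 0$, fix $(a,b)\in K$ with $\varepsilon(a,b)\neq 0$ and, using compactness of $K$, choose $c$ with $|c|$ large enough that the points $(a\cdot b,c)$, $(b,c)$ and $(a,b+c)$ all lie off $K$; then $(a\cdot b)\cdot c=a+b+c+\varepsilon(a,b)$ while $a\cdot(b\cdot c)=a+b+c$, so the operation is not associative and $(\mathbb{R},\cdot)$ is a proper loop. Hence the entire content of the construction is to exhibit a nonzero $\varepsilon$ of the above type for which $(\mathbb{R},\cdot)$ has the inverse property.

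For this operation the inverse property $a\cdot\big((-a)\cdot b\big)=b$ is equivalent to the functional equation
\[ \varepsilon(-x,y)+\varepsilon\big(x,(-x)\cdot y\big)=0 \qquad (x,y\in\mathbb{R}), \]
that is, to $L_{-a}=L_{a}^{-1}$, or, in the language of the pencil, to the incidence condition that \emph{whenever $(-a,b)$ lies on the level‑$c$ curve, the point $(a,c)$ lies on the level‑$b$ curve}. This is the delicate point, and it is where the precise shape of the refraction matters: it does not follow from the symmetry of $\varepsilon$, and a careless bending of the pencil destroys it. The organising remark is that $\tau(x,y)=(-x,\,x+y)$ is an involution of the plane which fixes $\{x=0\}$ pointwise, interchanges $\{y=0\}$ with $\{x+y=0\}$, and together with the swap $(x,y)\mapsto(y,x)$ generates a dihedral symmetry of the six sectors cut out by the three lines; to first order in $\varepsilon$ the functional equation above says exactly that $\varepsilon$ transforms by the sign character of this group — in particular $\varepsilon\circ\tau=-\varepsilon$ and $\varepsilon(-x,-y)=-\varepsilon(x,y)$. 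I would therefore take $K$ to be $\tau$‑invariant (and disjoint from the three lines) and build $\varepsilon$ — starting from an explicit piecewise‑linear ``bent‑line'' model carrying both symmetries — so that the incidence condition holds not merely to first order but exactly; carrying this out, either by a direct check tracking a single point through the finitely many linear pieces on the two sides of the identity, or by a fixed‑point argument that corrects an approximate solution, is the technical heart of the construction and the one step that requires genuine work. Granting it, the two preceding paragraphs show that $(\mathbb{R},\cdot)$ is a commutative proper topological loop with the inverse property, unit $e=0$, inverses $x^{-1}=-x$ and strictly increasing translations, which is the assertion of the theorem. Transporting this loop along any homeomorphism $h\colon\mathbb{R}\to(0,1)$ with $h(0)=\tfrac{1}{2}$ and $h(-x)=1-h(x)$ then yields, by Proposition \ref{correspondence between loops and James functions} and the invariance of properness under isomorphism, a proper Jamesian loop, hence an involutive Jamesian function that is not $f$‑representable — the ``slight modification'' alluded to above.
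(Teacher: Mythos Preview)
Your proposal is a conceptual outline rather than a proof. You correctly isolate the inverse property as the crux, identify the dihedral symmetry generated by $\tau(x,y)=(-x,x+y)$ and the coordinate swap as the organising principle, and reduce everything to exhibiting a nonzero $\varepsilon$ for which the \emph{exact} nonlinear functional equation $\varepsilon(-x,y)+\varepsilon\bigl(x,(-x)\cdot y\bigr)=0$ holds. But you then explicitly write ``Granting it, \dots'' and defer that construction to either an unspecified direct case check or an unspecified fixed‑point argument. That is precisely the step that carries all the content of the theorem; without it you have only verified that the remaining axioms (unit, inverse, commutativity, monotonicity, properness) impose no obstruction, which was never in doubt. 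The antisymmetry $\varepsilon\circ\tau=-\varepsilon$ you derive is only the linearisation of the IP equation at $\varepsilon=0$, and you give no argument that a first‑order solution can be corrected to an exact one while preserving symmetry, compact support, and $C^{1}$‑smallness.

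By contrast, the paper simply writes down Salzmann's explicit piecewise‑linear operation on $\mathbb{R}$, homogeneous of degree one with three sectors determined by the ratio $x/t$, and verifies commutativity, the inverse property, and the failure of power‑associativity (via $x^{2}\ast x^{2}\neq x^{3}\ast x$) by short direct computations in each sector. Note in particular that Salzmann's example does \emph{not} fit your framework of a compactly supported perturbation of addition: $x\ast t-(x+t)$ equals $-\tfrac{1}{2}t$, $-\tfrac{1}{2}x$, or $x+t$ on the three sectors and is supported on all of $\mathbb{R}^{2}$, so even your ``explicit piecewise‑linear bent‑line model'' would have to be something genuinely new rather than Salzmann's formula in disguise. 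Your symmetry analysis is illuminating and does explain \emph{why} a formula like Salzmann's, whose sectors are permuted by exactly this dihedral group, can satisfy the IP; but to turn the outline into a proof you must either produce your $\varepsilon$ and carry out the case analysis, or make the fixed‑point argument precise. The shorter path is the paper's: quote Salzmann's three‑line formula and check.
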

\begin{proof}
The operation, as defined in \cite[pg. 459]{sal}, is:
\[
    x\ast t =\begin{cases}
x+\frac{1}{2}t, &\text{ if } \frac{x}{t}\in (-\infty, -\frac{3}{2}]\cup [1,+\infty), \\
\frac{1}{2}x+t, &\text{ if } \frac{x}{t}\in  [-\frac{2}{3}, 1] , \\
2x+2t,&\text{ if } \frac{x}{t}\in  [-\frac{3}{2}, -\frac{2}{3}],
\\
x, & \text{ if } t=0.
\end{cases}
  \]
  Clearly,  $e=0$ is an identity element for the $\ast$-operation
  and one can easily check that $x\ast y= y\ast x$ for every $x,
  y\in \mathbb{R}$. Additionally, since for every $x\neq 0$,
  $\tfrac{x}{-x}=-1$, then
  $x\ast (-x)= 2x+ (-2x)=0$ and the pair $(\mathbb{R}, \ast)$ is a commutative loop such that every $x$ has $-x$ as its inverse.
With a little more effort we can show that it also possesses  the
inverse property:

\[
    (x\ast t)\ast (-t) =\begin{cases}
(x\ast t)-\frac{1}{2}t, &\text{ if } \frac{x\ast t}{-t}\in (-\infty, -\frac{3}{2}]\cup [1,+\infty), \\
\frac{1}{2}(x\ast t)-t, &\text{ if } \frac{x\ast t}{-t}\in  [-\frac{2}{3}, 1] , \\
2(x\ast t)-2t,&\text{ if } \frac{x\ast t}{-t}\in  [-\frac{3}{2},
-\frac{2}{3}],
\\
x, & \text{ if } t=0.
\end{cases}
  \]

  \begin{itemize} \item If $\tfrac{x}{t}\in (-\infty, -\frac{3}{2}]\cup
  [1,+\infty)$, then $x\ast t=x+\tfrac{1}{2}t$ and $\tfrac{x\ast
  t}{-t}=-\tfrac{x}{t}-\tfrac{1}{2}$, so $\tfrac{x\ast t}{-t} \leq
  -\tfrac{3}{2}$ or $\tfrac{x\ast t}{-t}\geq 1.$ This yields that $(x\ast t)\ast
  (-t)=x\ast t-\tfrac{1}{2}t=x+\tfrac{1}{2}t-\tfrac{1}{2}t=x. $

\item  If $\tfrac{x}{t}\in
  [-\tfrac{2}{3},1]$, then $x\ast t= \tfrac{1}{2}x+t$ and $\tfrac{x\ast
  t}{-t}= -\tfrac{x}{2t}-1$, so $\tfrac{x\ast t}{-t} \in [-\tfrac{3}{2}, -\tfrac{2}{3}]$ and $(x\ast t)\ast
  (-t)=2(x\ast t)-2t=2(\tfrac{1}{2}x+t)-2t=x.$

  \item   If $\tfrac{x}{t}\in
  [-\tfrac{3}{2},-\tfrac{2}{3}]$, then $x\ast t= 2x+2t$ and $\tfrac{x\ast
  t}{-t}= -\tfrac{2x}{t}-2$, so $\tfrac{x\ast t}{-t} \in [-\tfrac{2}{3}, 1]$ and $(x\ast t)\ast
  (-t)=\tfrac{1}{2}(x\ast t)-t=\tfrac{1}{2}(2x+2t)-t=x.$
  \end{itemize}

\noindent In any case $(x\ast t)\ast (-t)=x$.

On the other hand,  $(\mathbb{R}, \ast)$ is not associative. We will
actually show that it is not even power associative, by proving that
$x^2\ast x^2\neq x^3\ast x$ for every $x\neq 0$. Obviously both $x$
and $x^2=x\ast x$ are uniquely determined and since the operation is
commutative, $x^3=x\ast(x\ast x)=(x\ast x)\ast x$ is also defined.
However,  this is not true  for $x^4$:

  \begin{eqnarray*}
x^2&=&x\ast x= x+\tfrac{1}{2}x=\tfrac{3}{2}x, \\
x^3&=&x\ast(x^2)=x\ast \tfrac{3}{2}x= \tfrac{1}{2}x+\tfrac{3}{2}x=2x, \\
x^2\ast x^2&=&\tfrac{3}{2}x \ast \tfrac{3}{2}x =\tfrac{3}{2}x
+\tfrac{3}{4}x =\tfrac{9}{4}x, \\
x^3\ast x &=& (2x) \ast x =2x+\tfrac{1}{2}x=\tfrac{5}{2}x.
  \end{eqnarray*}
  Since $$x^2\ast x^2=(x\ast x) \ast
(x\ast x)\neq (x\ast (x\ast x))\ast x=x^3\ast x,$$  the operation is
not associative, thus $(\mathbb{R}, \ast)$ is a proper loop.
\end{proof}

All that is left now is to transfer Salzmann's example from
$\mathbb{R}$ to  $(0,1)$ preserving its main properties.
\begin{lemma} \label{transferSalzmann}
Let $(\mathbb{R}, \ast)$ be a topological loop satisfying all the
aforementioned  properties of Salzmann's example. That is,
$(\mathbb{R}, \ast)$
 is a commutative proper topological loop with the inverse property,
 its unit is $e=0$, the inverse of each
$x\in\mathbb{R}$ is $x^{-1}=-x$ and   its left translations are
strictly increasing. Then for every strictly increasing
homeomorphism $f: (0, 1)\rightarrow \mathbb{R}$ such that
$f(1-x)=-f(x)$, the operation $\cdot$ for which
\begin{equation} a\cdot b = f^{-1}\big(f(a)\ast f(b)\big), \ \ \ a,
b\in (0,1),
\end{equation} defines a proper Jamesian loop on $(0,1)$.
\end{lemma}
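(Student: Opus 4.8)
The plan is to treat this as a transport-of-structure argument. By the very definition of $\cdot$ we have $f(a\cdot b)=f(a)\ast f(b)$ for all $a,b\in(0,1)$, and since $f$ is moreover a homeomorphism, $f$ is an isomorphism of topological loops from $\big((0,1),\cdot\big)$ onto $(\mathbb{R},\ast)$. Consequently, almost every property required of $\big((0,1),\cdot\big)$ will be pulled back from the corresponding property of $(\mathbb{R},\ast)$; the only genuine work is to pin down \emph{which} element of $(0,1)$ is the unit and what the inverses are, and this is exactly where the symmetry hypothesis $f(1-x)=-f(x)$ enters.

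First I would record the consequences of $f(1-x)=-f(x)$. Putting $x=\tfrac12$ gives $f(\tfrac12)=-f(\tfrac12)$, hence $f(\tfrac12)=0$, i.e. $f^{-1}(0)=\tfrac12$. Now I verify axioms (1) and (2) of Definition \ref{defJamesLoops} directly. Commutativity of $\cdot$ is immediate from commutativity of $\ast$. For the unit, $a\cdot\tfrac12=f^{-1}\big(f(a)\ast f(\tfrac12)\big)=f^{-1}\big(f(a)\ast 0\big)=f^{-1}\big(f(a)\big)=a$, using that $0$ is the unit of $\ast$; commutativity handles the other side, so $e=\tfrac12$. For inverses, note $f(1-a)=-f(a)=\big(f(a)\big)^{-1}$ in $(\mathbb{R},\ast)$, whence $a\cdot(1-a)=f^{-1}\big(f(a)\ast(-f(a))\big)=f^{-1}(0)=\tfrac12=e$, so $a^{-1}=1-a$.

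Next, for the inverse property (axiom (3)) I would compute, for $a,b\in(0,1)$, using $f(a^{-1})=f(1-a)=-f(a)$ together with commutativity and the inverse property of $\ast$:
\[
a\cdot\big(b\cdot(a^{-1})\big)=f^{-1}\Big(f(a)\ast\big(f(b)\ast(-f(a))\big)\Big)=f^{-1}\big(f(b)\big)=b,
\]
where the middle equality is the identity $x\ast(y\ast x^{-1})=y$, valid in any commutative IP-loop with $x^{-1}=-x$. For axiom (4), observe that the right translation of $\cdot$ factors as $R_{b_0}=f^{-1}\circ R^{\ast}_{f(b_0)}\circ f$, a composition of the strictly increasing maps $f$, $R^{\ast}_{f(b_0)}$ (a translation of $\ast$, strictly increasing by hypothesis) and $f^{-1}$, hence is strictly increasing; by commutativity the same holds for left translations. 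Continuity of $\cdot$ follows from $\cdot=f^{-1}\circ\ast\circ(f\times f)$, and the inversion $a\mapsto 1-a$ is obviously continuous, so $\big((0,1),\cdot\big)$ is a topological loop. Finally, since $f$ is a topological loop isomorphism onto the \emph{proper} loop $(\mathbb{R},\ast)$, Remark \ref{propernessremark} yields that $\big((0,1),\cdot\big)$ is proper as well, completing the proof.

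I do not expect a real obstacle here; the argument is essentially bookkeeping. The one point that needs care is ensuring that it is precisely the hypothesis $f(1-x)=-f(x)$ that forces $e=\tfrac12$ and $a^{-1}=1-a$ (rather than some unit or inverses merely inherited abstractly from $\ast$), and checking that associativity is never secretly invoked — which is avoided throughout by using commutativity to move between left and right translations and between the two forms of the inverse property.
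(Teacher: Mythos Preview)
Your proposal is correct and follows essentially the same approach as the paper: both argue that $f$ is a loop isomorphism between $\big((0,1),\cdot\big)$ and $(\mathbb{R},\ast)$, use the symmetry $f(1-x)=-f(x)$ to identify $e=\tfrac12$ and $a^{-1}=1-a$, and then invoke Remark~\ref{propernessremark} to transfer commutativity, IP, and properness. Your write-up is simply more explicit than the paper's (spelling out the IP computation and the factorization $R_{b_0}=f^{-1}\circ R^{\ast}_{f(b_0)}\circ f$ for monotonicity), but there is no substantive difference in strategy.
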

\begin{proof}
The proof follows easily from the fact that the loops $(\mathbb{R},
\ast)$ and $\big((0,1), \cdot\big)$ are  isomorphic under the
function $g=f^{-1}$. Since $f(\tfrac{1}{2})=0$, $g(0)=\tfrac{1}{2}$
is the unit of $(0,1)$. Additionally,
$$x\cdot(1-x)=f^{-1}\big(f(x)\ast f(1-x)\big)=f^{-1}\big(f(x)\ast
(-f(x)\big)=f^{-1}(0)=\tfrac{1}{2}, $$  which shows that the inverse
operation on $(0,1)$ is given by $x^{-1}=1-x$. Commutativity,
properness and IP are properties preserved by isomorphisms (see
Remark \ref{propernessremark}), so they must hold on $\big((0,1),
\cdot\big)$ as well. The left translations being strictly increasing
follows from the fact that $f$ is strictly increasing.
\end{proof}

\begin{corollary} There exist involutive Jamesian functions which
are not   representable.
\end{corollary}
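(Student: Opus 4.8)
The plan is to assemble the corollary directly from the three results that immediately precede it. By Theorem~\ref{SalzTheorem}, Salzmann's construction furnishes a commutative proper topological loop $(\mathbb{R}, \ast)$ with the inverse property, unit $e=0$, inversion $x \mapsto -x$, and strictly increasing translations. By Lemma~\ref{transferSalzmann}, transporting this loop across any strictly increasing homeomorphism $f\colon (0,1) \to \mathbb{R}$ satisfying $f(1-x) = -f(x)$ yields a \emph{proper} Jamesian loop $\big((0,1), \cdot\big)$. Finally, Proposition~\ref{correspondence between loops and James functions} converts this loop into an involutive Jamesian function $J(a,b) = a\cdot(1-b)$, and its last sentence asserts that $J$ is $f$-representable if and only if the underlying loop is a group. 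Since our loop is proper (not a group), $J$ is not representable.

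The only genuine gap in this chain is producing a witness $f$ meeting the hypotheses of Lemma~\ref{transferSalzmann}: a strictly increasing homeomorphism $f\colon (0,1) \to \mathbb{R}$ with $f(1-x) = -f(x)$ for all $x \in (0,1)$. This is the step I would actually carry out, and it is routine. For instance, $f(x) = \log\!\big(\tfrac{x}{1-x}\big)$ (the logit function) works: it is a strictly increasing homeomorphism of $(0,1)$ onto $\mathbb{R}$, and $f(1-x) = \log\!\big(\tfrac{1-x}{x}\big) = -\log\!\big(\tfrac{x}{1-x}\big) = -f(x)$. With such an $f$ fixed, Lemma~\ref{transferSalzmann} applies verbatim. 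I do not expect any real obstacle here — the construction of $J$ and the verification that it fails representability are both already packaged into the cited results.

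Concretely, the proof reads: Let $f\colon (0,1)\to\mathbb{R}$, $f(x) = \log\!\big(x/(1-x)\big)$, which is a strictly increasing homeomorphism with $f(1-x) = -f(x)$. Let $(\mathbb{R},\ast)$ be Salzmann's loop from Theorem~\ref{SalzTheorem}. By Lemma~\ref{transferSalzmann}, $a\cdot b = f^{-1}\big(f(a)\ast f(b)\big)$ defines a proper Jamesian loop on $(0,1)$. By Proposition~\ref{correspondence between loops and James functions}, $J(a,b) = a\cdot(1-b)$ is an involutive Jamesian function, and since $\big((0,1),\cdot\big)$ is a proper loop — hence not a group — that same proposition tells us $J$ is not $f$-representable. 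This exhibits an involutive Jamesian function that is not representable, completing the proof. If desired, one can make the example fully explicit by composing the three formulas, but the existential statement needs nothing beyond the above.
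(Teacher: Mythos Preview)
Your proof is correct and follows essentially the same approach as the paper: invoke Theorem~\ref{SalzTheorem} and Lemma~\ref{transferSalzmann} to obtain a proper Jamesian loop, then apply Proposition~\ref{correspondence between loops and James functions} to conclude that the induced involutive Jamesian function is not representable. The only difference is that you explicitly exhibit the logit function as a witness for the homeomorphism $f$ required by Lemma~\ref{transferSalzmann}, whereas the paper leaves this implicit.
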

\begin{proof}
By   Proposition \ref{correspondence between loops and James
functions}, every proper Jamesian loop induces such a function and
proper Jamesian loops do exist, as was shown in Theorem
\ref{SalzTheorem} and Lemma \ref{transferSalzmann}.
\end{proof}

\begin{remark}
Actually Lemma \ref{transferSalzmann} can be used to produce an
infinity of counterexamples: Let $f, g: (0,1)\rightarrow \mathbb{R}$
be distinct homeomorphisms with the property that $f(1-x)=-f(x)$ and
$g(1-x)=-g(x)$ for every $x \in (0,1)$. Let also $J$ be Salzmann's
loop on $\mathbb{R}$ and $J_f$, $J_g$ the corresponding proper
Jamesian loops on $(0,1)$, as defined in Lemma
\ref{transferSalzmann}. We will show that $J_f$ and $J_g$ are
distinct. As $f$ and $g$ are continuous and not identically equal,
there exists an $a_0\in (\tfrac{1}{2}, \infty)$ and an
$\varepsilon>0$, such that $f(x)\neq g(x)$ for every $x\in
I=(a_0-\varepsilon, a_0+\varepsilon)$. Without loss of generality we
may also assume that $f(x)>g(x)$ on $I$. Let $x_0 \in I$. Then
\begin{equation*}
J_f(x_0, 1-x_0)=f^{-1}\big(f(x_0)\ast
f(x_0)\big)>g^{-1}\big(g(x_0)\ast g(x_0)\big) =J_g(x_0, 1-x_0),
\end{equation*}
as all $\ast$, $f$ and $g$ are strictly increasing. In
particular, $J_f\not\equiv J_g$.


Similarly, if one begins with two non-isomorphic Jamesian loops,
then their corresponding Jamesian functions are distinct. To sum up,
there are two different ways of constructing distinct families of
involutive Jamesian functions. One is, for a fixed Jamesian loop on
$\mathbb{R}$, to let the homeomorphism $f:(0,1)\rightarrow
\mathbb{R}$ of Lemma \ref{transferSalzmann} vary. The other is to
consider Jamesian functions induced by non-isomorphic Jamesian
loops.
\end{remark}

\begin{remark}
Even though there exist non-representable involutive Jamesian
functions, it is very natural to ask whether   these
``pathological'' functions can at least be  approximated by
representable ones. Let us describe the setting first: We denote the
classes of representable Jamesian functions and involutive Jamesian
functions by $\mathcal{J}_{\text{rep}}$ and
$\mathcal{J}_{\text{inv}}$ respectively and as we have shown,
$\mathcal{J}_{\text{rep}}$ is a proper subset of $
\mathcal{J}_{\text{inv}}$. Let also $S=(0,1)\times (0,1)$ be the
open unit square of the plane.

Since both $\mathcal{J}_{\text{rep}}$ and $
\mathcal{J}_{\text{inv}}$ are subsets of   $(C_b(S),
\|\cdot\|_\infty)$, the Banach space of bounded and continuous real
valued functions on $S$ equipped with the supremum norm, we can view
them as metric spaces with the induced metric. It is easy to check
that $\mathcal{J}_{\text{rep}}$ is not dense in $
\mathcal{J}_{\text{inv}}$ and, in fact, a continuity argument and an
appeal to
 Propositions \ref{transitivityEquivalentFormulations} and
  \ref{correspondence between
loops and James functions}  shows that none of the elements of
$\invo\setminus \repr$ can be approximated by elements of $\repr$.
\end{remark}

Before closing this section it is worth to mention a question
regarding Jamesian loops  and concerns what
happens in the case of differentiability. 
Salzmann's loop is continuous, but clearly   not differentiable
everywhere, so the following remains open: \begin{question} Is it
true that every differentiable involutive Jamesian function is
 representable? \end{question}

\subsection{A Specific Example} \label{Specific Example Paragraph}
In the previous paragraph we provided a rule to construct
non-representable Jamesian  functions, however,  until now, we have
not been able to give an explicit example of one. In this paragraph
we will try to do so.

By Lemma \ref{transferSalzmann}, in order to find a formula for
$J(a, b)$, we need to specify a homeomorphism $f: (0,1)\rightarrow
\mathbb{R}$ and then compute
\begin{equation}\label{explicit formula}
  J(a, b) = f^{-1}\big(f(a)\ast f(1-b)\big),
\end{equation}
where $(\mathbb{R}, \ast)$ is   Salzmann's loop. If we examine the
definition of $\ast$, we see that the plane is partitioned into
three regions and $a\ast b$ is defined according to which region the
number $\tfrac{a}{b}$ belongs to. So, in order to compute
(\ref{explicit formula}), first we have to determine where
$\tfrac{f(a)}{f(1-b)}$ belongs, for every $a, b\in (0,1)$. Depending
on $f$, this can be a difficult to nearly impossible task. 

It soon becomes clear that we need to avoid  this complexity by
picking $f$ appropriately: If $f$ acts as the identity function in a
neighborhood around $\tfrac{1}{2}$, then $\tfrac{f(a)}{f(1-b)}$ is
easily computed for $a, b$ in this neighborhood and, quite possibly,
so is $f^{-1} \big(f(a)\ast f(1-b)\big)$. This gives us a simple
formula for $J(a, b)$. 
The downside is that on $(0,1)\setminus I$, $f$ is no longer the
identity, but this is not something we can avoid, as any
homeomorphism $f:(0,1)\rightarrow \mathbb{R}$ is bound to deform
some regions of $(0,1)$.

In what follows, we  construct an example of a non-representable
involutive Jamesian function $J$   and we give an explicit formula
for $J(a, b)$, but only for $(a, b)\in \Delta \subseteq
S=(0,1)\times(0,1)$, where $\Delta$ is a region we will specify
using the ideas we just discussed.

Let $\ee>0$ and set $I_\ee=[\ee,1-\ee]$. We define
$f:I_\ee\rightarrow \mathbb{R}$ by $f(x)=x-\tfrac{1}{2}$ for every
$x\in I_\ee$. Clearly $f(1-x)=-f(x)$ for every $x\in I_\ee$ and f
can be extended to a homeomorphism from $(0,1)$ to $\mathbb{R}$
satisfying the same property. We will denote this extension again by
$f$. The inverse of $f$ has the property that
$f^{-1}(x)=x+\tfrac{1}{2}$ for every $x\in I_\ee'=[\ee-\tfrac{1}{2},
\tfrac{1}{2}-\ee] \subseteq \mathbb{R}$. If   $a, b\in I_\ee$ such
that $f(a)\ast f(b)= (a-\tfrac{1}{2})\ast (\tfrac{1}{2}-b)\in
I_\ee'$, then
\begin{equation}
J(a, b)= f^{-1}\big(f(a)\ast f(1-b)\big)= f(a)\ast
f(1-b)+\frac{1}{2},
\end{equation}
so we have a simple formula for $J(a, b)$. This is possible on the
following subsets of $\mathbb{R}^2$:
\begin{eqnarray*}
A_1&=&\left\{(a, b)\in S: \tfrac{a-1/2}{1/2-b}\in \left(-\infty,
-\tfrac{3}{2}\right]\cup [1,\infty)  \text{ and }
a-\tfrac{b}{2}-\tfrac{1}{4} \in I_\ee'\right\}, \\
A_2&=&\left\{(a, b)\in S: \tfrac{a-1/2}{1/2-b}\in
\left[-\tfrac{2}{3}, 1\right] \text{ and }
\tfrac{a}{2}-b+\tfrac{1}{4} \in I_\ee'\right\}, \\
A_3&=&\left\{(a, b)\in S: \tfrac{a-1/2}{1/2-b}\in
\left[-\tfrac{3}{2},-\tfrac{2}{3} \right] \text{ and } 2a-2b \in
I_\ee'\right\}.
\end{eqnarray*}
So we can explicitly define $J$ on $\Delta=A_1\cup A_2\cup A_3$ 
\begin{figure}[htb]
\centering \def\svgwidth{220pt}
\scalebox{0.8}{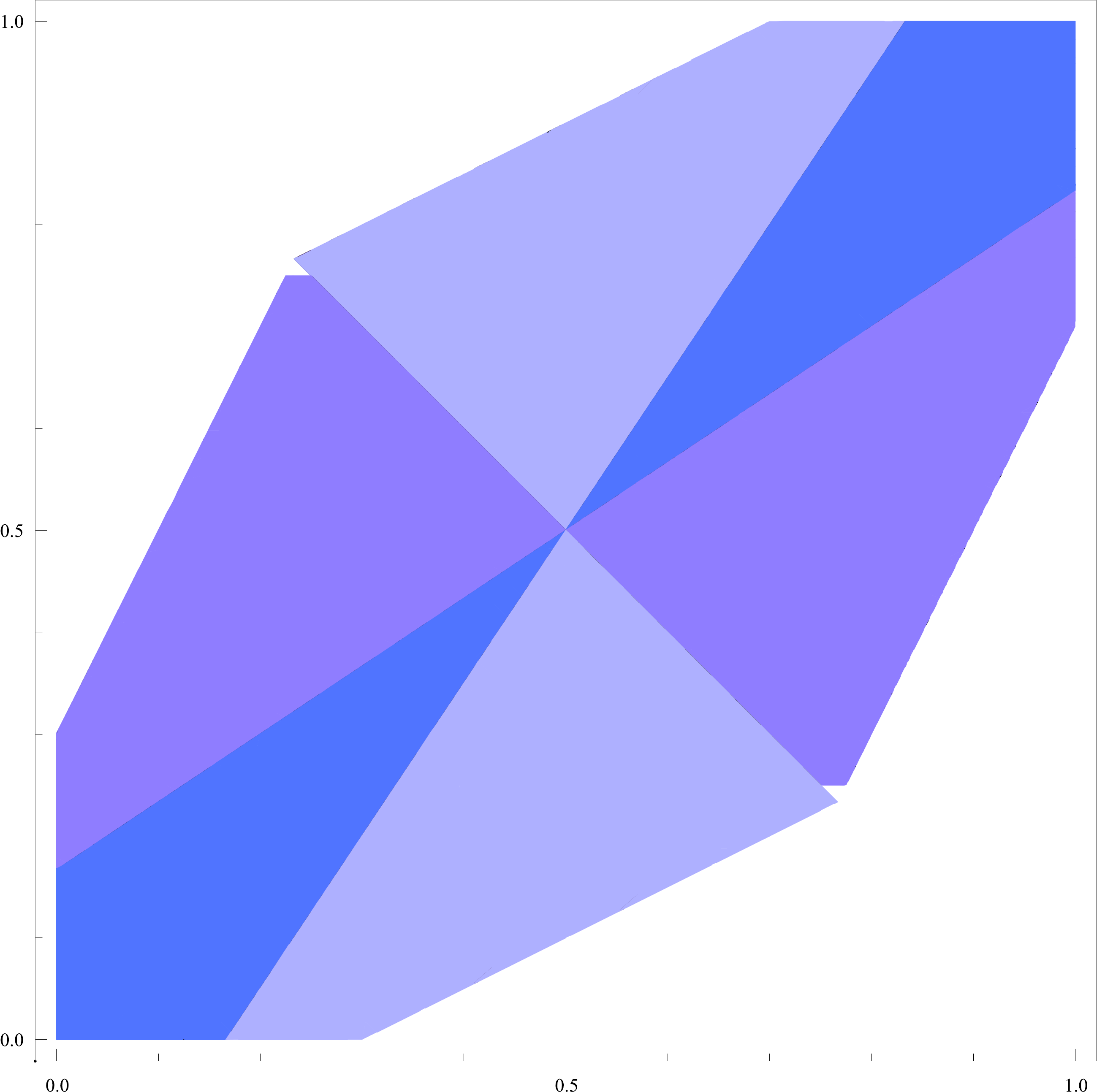} \caption{The region
$\Delta=A_1\cup A_2\cup A_3$ where $J$ can be  defined  explicitly.
}\label{graf}
\end{figure}

\noindent and the formula of $J$ is:
\[
 J(a, b)=\left(a-\tfrac{1}{2}\right)\ast \left(\tfrac{1}{2}-b\right)+\tfrac{1}{2} =\begin{cases}
a-\frac{b}{2}+\frac{1}{4}, &\text{ if } (a, b)\in A_1, \\
\frac{a}{2}-b+\frac{3}{4}, &\text{ if } (a,b)\in A_2, \\
2a-2b+\frac{1}{2},&\text{ if } (a, b)\in A_3,
\\
f^{-1}\big(f(a)\ast f(1-b)\big), &\text {everywhere else.}
\end{cases} \]

\begin{figure}
  \centering
    \scalebox{0.3}{\includegraphics{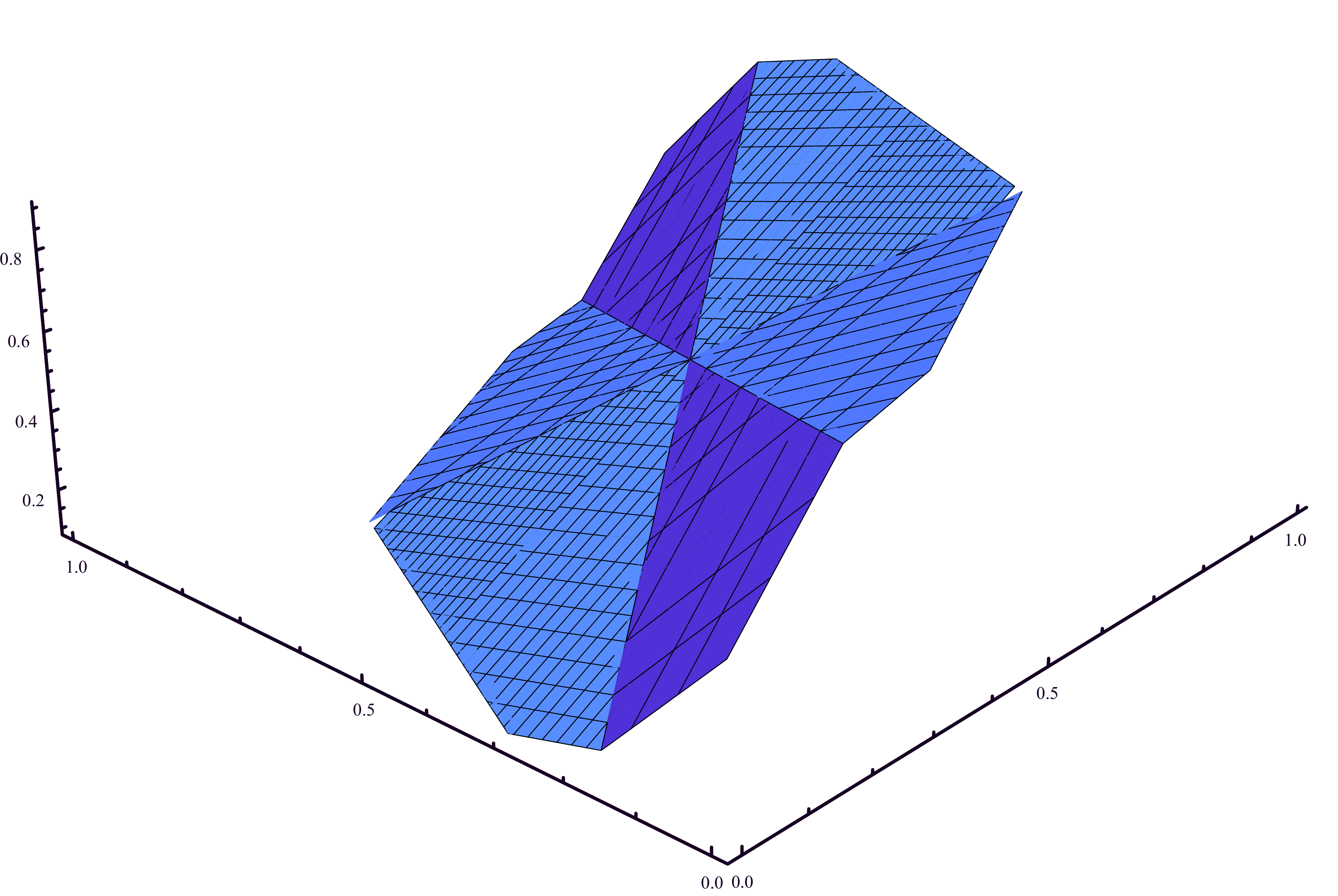}}
    \caption{The graph of $J$ on $\Delta = A_1\cup A_2\cup A_3$.  }\label{graf2}
    \end{figure}

\section{Conclusions}
We proved that an involutive Jamesian function may not necessarily
be written in the form \begin{equation} J(a, b)=f^{-1}\big(f(a)+
f(1-b)\big). \end{equation}
Despite this, we can still   find a very similar representation by
allowing   the addition in the expression ``$f(a)+f(1-b)$'' to be
replaced by a more general operation $\ast$:
\begin{proposition} Every involutive Jamesian function $J$ can be
written as \begin{equation}J(a, b)=f^{-1}\big(f(a)\ast f(1-b)\big),
\end{equation} where $f:(0,1)\rightarrow \mathbb{R}$ is a strictly
increasing homeomorphism with $f(1-x)=-f(x)$ for every $x\in (0,1)$
and $(\mathbb{R}, \ast)$ is a loop.
\end{proposition}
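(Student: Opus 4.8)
The plan is to invert the correspondence we have already built. Given an involutive Jamesian function $J$, Proposition \ref{correspondence between loops and James functions} gives us a Jamesian loop $\big((0,1), \cdot\big)$ with $a\cdot b = J(a, 1-b)$, unit $e = \tfrac12$, and inverses $a^{-1} = 1-a$. So it suffices to produce a strictly increasing homeomorphism $f:(0,1)\to\mathbb{R}$ with $f(1-x) = -f(x)$ and then transport the loop operation to $\mathbb{R}$ via $x\ast y := f\big(f^{-1}(x)\cdot f^{-1}(y)\big)$; then $(\mathbb{R},\ast)$ is automatically a loop (the isomorphic image of a loop is a loop), and unwinding definitions gives $J(a,b) = a\cdot(1-b) = f^{-1}\big(f(a)\ast f(1-b)\big)$, which is exactly the claimed representation. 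The only real content, therefore, is the existence of the homeomorphism $f$ with the symmetry property $f(1-x) = -f(x)$; everything else is a bookkeeping check that the transported operation $\ast$ is still a loop and that the symmetry of $f$ makes $f(1-x)$ play the role of $-f(x)$, i.e. that $e$ maps to $0$ and $a^{-1}$ maps to $-f(a)$.

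The construction of $f$ is the main step. First I would fix $f(\tfrac12) = 0$, which is forced by $f(1-\tfrac12) = -f(\tfrac12)$. Then I would choose $f$ arbitrarily as a strictly increasing homeomorphism from $(\tfrac12, 1)$ onto $(0, +\infty)$ — for instance $f(x) = \tfrac{2x-1}{2(1-x)}$, or even just $f(x) = \tan\!\big(\pi(x - \tfrac12)\big)$ restricted appropriately, any concrete choice works — and then \emph{define} $f$ on $(0, \tfrac12)$ by the reflection formula $f(x) := -f(1-x)$. One checks that this $f$ is strictly increasing across all of $(0,1)$ (it is increasing on each half, continuous at $\tfrac12$ with common value $0$, and the two pieces have the right one-sided limits), that it is a bijection onto $\mathbb{R}$ (it maps $(\tfrac12,1)$ onto $(0,\infty)$ and $(0,\tfrac12)$ onto $(-\infty,0)$), hence a homeomorphism, and that $f(1-x) = -f(x)$ holds for all $x\in(0,1)$ by construction (it holds for $x > \tfrac12$ by definition of the left piece, for $x < \tfrac12$ by symmetry of that same relation, and for $x = \tfrac12$ trivially).

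With $f$ in hand, define $\ast$ on $\mathbb{R}$ by $u\ast v = f\big(f^{-1}(u)\cdot f^{-1}(v)\big)$. Since $f$ is a bijection, $f^{-1}$ is well defined, and $\ast$ is simply the operation obtained by conjugating $\cdot$ through $f^{-1}$; as in Remark \ref{propernessremark} and Lemma \ref{transferSalzmann}, the image of a loop under a bijection with this intertwining property is a loop. Concretely, $0 = f(\tfrac12)$ is a two-sided identity for $\ast$ because $\tfrac12$ is the identity of $\cdot$, and $-u = -f(f^{-1}(u)) = f(1 - f^{-1}(u)) = f\big((f^{-1}(u))^{-1}\big)$ is the $\ast$-inverse of $u$ because $1-a$ is the $\cdot$-inverse of $a$. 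Finally, for $a,b\in(0,1)$,
\begin{equation*}
f^{-1}\big(f(a)\ast f(1-b)\big) = f^{-1}\Big(f\big(a\cdot(1-b)\big)\Big) = a\cdot(1-b) = J(a, b),
\end{equation*}
which is the desired formula. I expect no genuine obstacle here: the one point demanding a little care is verifying that the reflected $f$ is still a homeomorphism onto all of $\mathbb{R}$ (continuity and monotonicity at the gluing point $\tfrac12$, and surjectivity), but this is routine once a concrete increasing homeomorphism $(\tfrac12,1)\to(0,\infty)$ is named. Note also that, unlike Lemma \ref{transferSalzmann}, here $\ast$ is merely \emph{a} loop on $\mathbb{R}$ — not necessarily topological in any strong sense beyond continuity inherited from $f$ and $\cdot$ — and that is all the statement asserts.
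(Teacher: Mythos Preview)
Your proposal is correct and follows exactly the approach implicit in the paper: the proposition is stated in the Conclusions without an explicit proof, being regarded as an immediate consequence of the correspondence in Proposition~\ref{correspondence between loops and James functions} together with the transport construction of Lemma~\ref{transferSalzmann} run in reverse. Your write-up simply makes that implicit argument explicit, including the (routine) construction of a symmetric homeomorphism $f$ --- and indeed your example $f(x)=\tan\bigl(\pi(x-\tfrac12)\bigr)$ already satisfies $f(1-x)=-f(x)$ on all of $(0,1)$, so the reflection step is not even needed there.
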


This gives us a complete description of involutive Jamesian
functions, but at a certain cost: The  well studied and understood
operation of real addition has been replaced by a mysterious and
probably complicated real loop operation. There is a deep theory
behind the attempts of classification of loops on $\mathbb{R}$ and
only certain classes of them  have been completely classified.
Unfortunately the Jamesian loops do not seem to belong in any of
these classes, so the (equivalent) problems of classifying
involutive Jamesian functions and classifying Jamesian loops remain
open. For an idea of how complicated, but also exciting, this
problem can be, the reader should check \cite[Chapter~18]{ns}.

\paragraph*{\footnotesize{\textbf{Acknowledgments:}}}
 \footnotesize{{The author is grateful to Professors Christopher Hammond,
Warren Johnson and Steven Miller for their communication and for the
valuable comments and suggestions they made, after reading an early
version of this manuscript.}

\sectionfont{\bfseries  \small \textlcsc }


\begin{thebibliography}{2}

\bibitem{bro} L. E. J. Brouwer,   Die Theorie der endlichen kontinuierlichen
Gruppen, unabhangig von den Axiomen von Lie.   \textit{Mathematische
Annalen} \textbf{67} (1909): 246-267,
\textcolor{blue}{http://dx.doi.org/10.1007/BF01456869}.


\bibitem{hjm}C. N. B. Hammond, W. P. Johnson,   S. J. Miller.
The James function. \textit{Mathematics Magazine}, \textbf{88}(1)
(2015), 54-71,
\textcolor{blue}{http://dx.doi.org/10.4169/math.mag.88.1.54}.



\bibitem{hos} M. Hossz\'{u},   On the functional equation of
transitivity. \textit{Acta Sci. Math.} (Szeged)
\textbf{15}:3-4(1953-54), 203-208.

\bibitem{james} B. James, \textit{1981 Baseball Abstract},
self-published, Lawrence, KS, 1981.


\bibitem{ns}P. T. Nagy, K. Strambach, \textit{Loops in Group Theory and Lie Theory.} de
Gruyter Expositions in Mathematics, 35. Walter de Gruyter, Berlin,
2002, \textcolor{blue}{http://doi.org/10.1515/9783110900583}.

\bibitem{pfl1} H. O. Pflugfelder, Historical notes on loop theory, \textit{Comment. Math. Univ. Carloniae} \textbf{41}(2)
(2000), 359-370.

\bibitem{pfl2} H. O. Pflugfelder, \textit{Quasigroups and Loops: An Introduction}, Heldermann Verlag, Berlin, 1990.


\bibitem{sal}H. Salzmann, Topologische projektive Ebenen, \textit{Math. Z.} \textbf{67} (1957),
436-466, \textcolor{blue}{http://doi.org/10.1007/BF01258875}.

 \bibitem{str}K. Strambach,   Geometry and Loops, in \textit{Geometries and Groups: Proceedings of a Colloquium Held at the Freie Universitat Berlin},
 Springer-Verlang, 1981,
 \textcolor{blue}{http://doi.org/10.1007/BFb0091014}.



\end{thebibliography}
\end{document}